\def\lra{\longrightarrow}
\theoremstyle{plain}
\newtheorem{thmx}{Theorem}
\newtheorem{thm}{Theorem}[section]
\newtheorem{lem}[thm]{{Lemma}}
\newtheorem{prop}[thm]{Proposition}
\newtheorem{defi}[thm]{Definition}
\theoremstyle{remark}
\numberwithin{equation}{section}
\def\calo{\mathcal{O}}
\def\cald{\mathcal D}
\def\calm{\mathcal M}
\def\calo{\mathcal O}
\def\calx{\mathcal X}
\def\calw{\mathcal W}
\begin{document}

\title[Topological Hyperbolicity of Moduli Spaces]{Topological Hyperbolicity of Moduli Spaces of Elliptic Surfaces}

\subjclass[2010]{14D22, 14F35, 14C30}
\keywords{topological hyperbolicity, elliptic surfaces, Torelli theorem}

\author{Xin L\"u}
\address{School of Mathematical Sciences, Key Laboratory of MEA(Ministry of Education) \& Shanghai Key Laboratory of PMMP, East China Normal University, Shanghai 200241, China}
\email{xlv@math.ecnu.edu.cn}
\thanks{This work is supported by Shanghai Pilot Program for Basic Research, National National Science foundation of China, and Science and Technology Commission of Shanghai Municipality (No. 22DZ2229014).}

\author{Ruiran Sun}
\address{Department of Mathematics and Statistics, McGill University, Montr\'{e}al, Qu\'{e}bec, Canada, H3A 0B9}
\email{sunruiran@gmail.com}

\author{Kang Zuo}
\address{ School of Mathematics and Statistics, Wuhan University, Luojiashan, Wuchang, Wuhan, Hubei, 430072, P.R. China; Institut f\"ur Mathematik, Universit\"at Mainz, Mainz, Germany, 55099}
\email{zuok@uni-mainz.de}

\phantomsection
\begin{abstract}
\addcontentsline{toc}{section}{Abstract}
We introduce the notion of topological hyperbolicity to characterize the largeness of the topological fundamental group of a complex variety. Inspired by the Shafarevich conjecture, we propose to study the topological hyperbolicity of moduli spaces of polarized manifolds. We provide two pieces of supporting evidence: first, we show that moduli spaces where the infinitesimal Torelli theorem holds are very close to being topologically hyperbolic. Second, we establish a weak form of topological hyperbolicity for moduli spaces of elliptic surfaces of Kodaira dimension one without multiple fibers, where the infinitesimal Torelli theorem generally does not hold.
\end{abstract}

\maketitle



\section{Introduction}\label{introduction}
At the 1962 International Congress of Mathematicians in Stockholm, Shafarevich formulated an influential conjecture, now known as the Shafarevich conjecture. One part of the conjecture is the following
\begin{thm}[Shafarevich Hyperbolicity Conjecture]
Let $Y$ be a smooth projective curve of genus $g$ over an algebraically closed field $k$ of characteristic $0$ and $\Delta \subset Y$ a finite subset. Fix an integer $q \geq 2$. Let $f:\, X \to Y$ be a non-isotrivial family of curves of genus $q$, and the resitiction $f:\, X \setminus f^{-1}(\Delta) \to Y \setminus \Delta$ is smooth. Then
\[
2g-2+ \# \Delta >0.
\]
\end{thm}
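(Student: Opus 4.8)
The plan is to reduce the statement to a hyperbolicity property of a period map. By the Lefschetz principle we may assume $k = \C$. Write $U = Y \setminus \Delta$ and recall that $2g - 2 + \#\Delta = \deg\Omega^1_Y(\log\Delta) = -\chi_{\mathrm{top}}(U)$, so the asserted inequality says precisely that $U$ is a hyperbolic Riemann surface, i.e. that $U$ is \emph{not} one of the four Riemann surfaces of non-negative topological Euler characteristic: $\mathbb{P}^1$, $\C$, $\C^{\ast}$, or a smooth projective curve $E$ of genus one (these being the cases $(g,\#\Delta)\in\{(0,0),(0,1),(0,2),(1,0)\}$). Arguing by contradiction, assume $U$ is one of these four; then the universal cover $\widetilde U$ is either $\mathbb{P}^1$ or $\C$. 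The restriction $f^{\circ}\colon X^{\circ} \to U$ of $f$ over $U$ is a smooth projective family of genus-$q$ curves whose $H^1$ carries a polarized weight-one variation of Hodge structure; pulled back to $\widetilde U$ the underlying local system becomes trivial, so we obtain a holomorphic period map
\[
p\colon \widetilde U \longrightarrow \mathbb{H}_q ,
\]
into the Siegel upper half space of genus $q$, the period domain for principally polarized weight-one Hodge structures of rank $2q$ (Griffiths transversality is automatic since the weight is one). I claim $p$ is nonconstant: were it constant, all the principally polarized Jacobians $\mathrm{Jac}(X^{\circ}_t)$ would be mutually isomorphic, hence by the Torelli theorem all the curves $X^{\circ}_t$ would be mutually isomorphic, and for a smooth projective family of curves of genus $q\ge 2$ this means $f^{\circ}$ is isotrivial, contrary to hypothesis.

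Now I would use the hyperbolicity of the target. The Siegel domain $\mathbb{H}_q$ is biholomorphic to a bounded symmetric domain, hence is Kobayashi hyperbolic --- in fact its Bergman metric has holomorphic sectional curvature bounded above by a negative constant. On the other hand $\widetilde U$, being $\mathbb{P}^1$ or $\C$, has identically vanishing Kobayashi pseudodistance. Since a holomorphic map does not increase the Kobayashi pseudodistance and $\mathbb{H}_q$ carries a genuine Kobayashi distance, the map $p\colon \widetilde U \to \mathbb{H}_q$ is constant, contradicting the previous paragraph. Hence no non-isotrivial $f$ exists when $2g - 2 + \#\Delta \le 0$, which is the assertion.

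The substantive inputs here are the Torelli theorem (to convert constancy of the period map into isotriviality of the family) and the hyperbolicity of the Siegel domain, which in this weight-one case is soft because $\mathbb{H}_q$ is a bounded symmetric domain. The step I would expect to become the real obstacle in the more general setting that motivates this paper is the replacement of the second paragraph when the relevant period domain is no longer symmetric: there one must invoke Griffiths' negativity of the holomorphic sectional curvature of the period domain along horizontal directions and feed it into the Ahlfors--Schwarz lemma to obtain the distance-decreasing property directly. A parallel, purely algebraic proof of the theorem is also available: after a semistable reduction over a finite cover of $Y$ that is \'etale over $U$ --- which multiplies $2g-2+\#\Delta$ by the degree and so preserves its sign --- one combines the Arakelov inequality $\deg f_{\ast}\omega_{X/Y}\le\tfrac{q}{2}\deg\Omega^1_Y(\log\Delta)$ with the strict positivity $\deg f_{\ast}\omega_{X/Y}>0$ valid for non-isotrivial families (Fujita semipositivity of the Hodge bundle, together with the fact that a numerically flat Hodge bundle forces a constant period map); in that approach the Arakelov inequality is the technical heart.
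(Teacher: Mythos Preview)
The paper does not give a proof of this theorem. It is stated in the introduction purely as background and motivation, with the proof attributed to Parshin \cite{par-68} and Arakelov \cite{ara-71}; the paper's own arguments begin only with the material on elliptic surfaces in \S\ref{infinitesimal Torelli}. So there is no proof in the paper to compare your proposal against.

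That said, your argument is correct. Reducing to $k=\C$, listing the four non-hyperbolic candidates for $U$, lifting the weight-one period map to the universal cover, and then invoking Kobayashi hyperbolicity (equivalently, boundedness) of $\mathbb{H}_q$ together with the classical Torelli theorem to rule out a constant period map is a standard and complete proof of the statement. Your closing remarks are also apt in the context of this paper: the authors' strategy in \S3 is exactly to produce a nonconstant period map (weight one from the Iitaka base, or weight two via Saito's infinitesimal Torelli) and then appeal to a hyperbolicity/rigidity principle --- in their case Simpson's correspondence rather than the Ahlfors--Schwarz lemma --- to constrain $\pi_1$ of the base. The alternative Arakelov-inequality route you sketch is likewise valid but is not the viewpoint taken anywhere in the paper.
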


Here the non-isotriviality of $f$ means that the family $f$ cannot become trivial after any finite base change.
The above Shafarevich hyperbolicity conjecture was proved by Parshin \cite{par-68} and Arakelov \cite{ara-71}. By the theorem above we know that the quasi-projective curve $U:= Y \setminus \Delta$ is a hyperbolic curve. Over the field of complex numbers, an algebraic curve $U$ being hyperbolic is characterized in three equivalent ways:
\begin{enumerate}
	\item the Euler characteristic of $U$ is negative;
	\item there exists  a negatively curved metric on $U$;
	\item the fundamental group $\pi_1(U)$ is infinite and non-abelian.
\end{enumerate}

In consideration of the importance of the Shafarevich conjecture in algebraic and arithmetic geometry, it is significant to generalize the above three perspectives to the higher dimension cases.
Let $f:\, V \to U$ be a family of polarized manifolds with semi-ample canonical divisors. One expects the base space $U$ to be ``hyperbolic''.
When $U$ is a quasi-projective variety of higher dimension, there are different notions of hyperbolicity;
for instance, the Brody hyperbolicity and Kobayashi hyperbolicy (cf. \cite{kob-98}).
In this paper, we would like to introduce the notion of \emph{topological hyperbolicity}. We first recall that for a finitely generated group one can associate a growth function $\ell$: for each positive integer $s$ let $\ell(s)$ be the number of distinct group elements which can be expressed as words of length $\leq s$ with a fixed choice of generators and their inverse (cf. for instance \cite{Mil68}).
We say the finitely generated group has \emph{exponential growth} if $\ell(s) \approx a^s$ for a real constant $a>1$. The notion of {exponential growth} characterizes the large size of this group. By definition one can check that if a finitely generated group has exponential growth, then it has to be infinite and non-abelian.
\begin{defi}
A quasi-projective variety $U$ over $\mathbb{C}$ is said to be \emph{topologically hyperbolic} if its topological fundamental group satisfies the following property: for each subvariety $Z \subset U$ of positive dimension, $\pi_1(\widetilde{Z})$ has exponential growth (where $\widetilde{Z}$ is the normalization of $Z$), and the image  $\mathrm{Im}[\pi_1(\widetilde{Z}) \to \pi_1(U)]$ is infinite and non-abelian.
\end{defi}

All these notions of hyperbolicity should be viewed as some sort of high dimensional generalization of the three equivalent characterization of the hyperbolic curves.
It is well-known that the Kobayashi hyperbolicity implies the Brody hyperbolicity; and these two notions are equivalent for compact complex manifolds.
By a conjecture of Lang (cf. \cite{L86}), a projective variety is Brody (equivalently Kobayashi) hyperbolic if and only if every subvariety of positive dimension is of general type.
However, the notion of topological hyperbolicity seems to be {\it stronger}.
In fact, there exist Kobayashi hyperbolic manifolds which are not topologically hyperbolic:
for instance, the complement $\mathbb{P}^N \setminus H$ is Kobayashi hyperbolic (cf. \cite{SY96}) for a general smooth hypersurface $H\subseteq \mathbb{P}^N$ of sufficiently large degree,
but its fundamental group $\pi_1(\mathbb{P}^N \setminus H)$ is abelian ( cf. {\cite[Corollary~2.8]{nori-83}} ) and hence not topologically hyperbolic.
Assuming Lang's conjecture, one sees that if the variety is projective of dimension two, then the topological hyperbolicity implies the Kobayashi hyperbolicity and hence also the Brody hyperbolicity.
However, the converse does not hold in general:
for example, a general hypersurface $H\subseteq \mathbb{P}^N$ of sufficiently high degree is Kobayashi hyperbolic (cf. \cite{Siu02}),
but on the other hand if $N\geq 3$, $H$ is simply connected by the Lefschetz hyperplane theorem and hence not topologically hyperbolic.

In the past decades, many works on the hyperbolicity properties of the moduli space of polarized varieties
have been invested, motivated by the Shafarevich problem and its higher dimensional generalizations.
For instance, Viehweg and the third named author proved that $U$ parametrizing canonically polarized manifolds 
is 
Brody hyperbolic in \cite{vz}.
Later, To-Yeung have made a further step by showing that $U$ is Kobayashi hyperbolic \cite{ToYeung}.
For more details and references, the readers are recommended to the surveys \cite{vie-01}, \cite{kovac-03}, \cite{mvz-05}, \cite{kovac-05} etc.
However, very little is known for the topological hyperbolicity.

In this paper we propose to study the topological hyperbolicity of moduli spaces of polarized manifolds. As the first supporting evidence, we prove that:
\begin{thmx}\label{main-thm1}
  Let $X$ be a projective manifold supporting a polarized integral variation of Hodge structures such that the induced period map is quasi-finite. Then for each smooth closed subvariety $Z \subset X$ of positive dimension, $\pi_1(Z)$ has exponential growth and the image $\mathrm{Im}[\pi_1(Z) \to \pi_1(X)]$ is infinite and non-abelian.
\end{thmx}
Theorem~\ref{main-thm1} suggests that moduli spaces where Torelli-type theorems hold tend to be topologically hyperbolic.

If a finitely generated group $\Gamma$ is linear (i.e. can be realized as a subgroup of some general linear group), a classical result tells us that $\Gamma$ is either of polynomial growth in which case $\Gamma$ is virtually nilpotent, or of exponential growth otherwise (\cite{Tit72, Mil68b, Wol68}). To derive Theorem~\ref{main-thm1} as a corollary, it is essential to rule out the possibility of the monodromy representations of polarized VHS being virtually nilpotent. However, Deligne's classical semi-simplicity theorem only imposes restrictions on the algebraic monodromy groups (i.e., the Zariski closure of the monodromy representation image). Thus one has limited information about the monodromy group itself, which is a finitely generated linear group, unless the monodromy group can be explicitly determined in certain cases (such as the mapping class group, cf. \cite{AAS}). Consequently, rather than analyzing the monodromy representation directly, we adopt Milnor's strategy from \cite{Mil68} to prove the exponential growth property.

The proof of Theorem~\ref{main-thm1} utilizes a crucial volume estimate from \cite{BT-Ax} and Milnor's method from \cite{Mil68}. Given that the image of the quasi-finite period map may have singularities, we generalize Milnor's method to accommodate our setting.

Conversely, it is natural to investigate moduli spaces where Torelli-type theorems are {\it invalid}. As the second result, we prove that the base space of family of elliptic surfaces of Kodaira dimension one satisfies a weak form of topological hyperbolicity:
\begin{thmx}\label{main-thm2}
  Let $f:\,\calx \to B$ be a smooth family of polarized elliptic surfaces of Kodaira dimension one. Assume that every fiber of $f$ is a relative minimal elliptic surface
 without multiple fiber.
	Assume that the induced classifying map from the base $B$ to the coarse moduli space is quasi-finite. Then for each subvariety $Z \subset B$ of positive dimension ($Z$ can be equal to $B$), the fundamental group $\pi_1(\widetilde{Z})$ is infinite and non-abelian, where $\widetilde{Z}$ is the normalization of $Z$.
\end{thmx}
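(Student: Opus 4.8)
\noindent\emph{Proof strategy.} The plan is to pass from the fundamental group to the monodromy of the variation of Hodge structure carried by the family, to show that the Zariski closure of this monodromy has non-trivial semisimple part, and to feed in a Torelli theorem for elliptic surfaces of Kodaira dimension one only at the very last step. To begin, I would reduce to the case $Z=B$: we may assume $Z$ irreducible, so that $\widetilde Z$ is connected, normal and quasi-projective, and pulling $f$ back along $\widetilde Z\to Z\hookrightarrow B$ produces a family $\widetilde f\colon \widetilde{\calx}\to\widetilde Z$ of polarized elliptic surfaces of Kodaira dimension one without multiple fibre whose induced classifying map to the coarse moduli space $M$ is again quasi-finite, being the composition of the finite map $\widetilde Z\to Z$ with a quasi-finite one. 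Hence it suffices to prove the assertion for the base of any such family and then apply it to $\widetilde f$; renaming, we assume from now on that $B$ is connected, normal and quasi-projective with $\dim B>0$.

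The polarized family $f$ carries a polarized $\Q$-variation of Hodge structure $\mathbb H=R^2f_*\Q$ of weight two (take the relative primitive polarization attached to the given one), hence a monodromy representation $\rho\colon\pi_1(B)\to \mathrm{GL}(\mathbb H_b)$. As $\pi_1(B)$ surjects onto $\mathrm{Im}(\rho)$, it is enough to show that $\mathrm{Im}(\rho)$ is infinite and non-abelian. Let $\mathbf G$ denote the $\Q$-Zariski closure of $\mathrm{Im}(\rho)$. By a theorem of Andr\'e (resting on Deligne's semisimplicity theorem for polarizable variations), the identity component $\mathbf G^{\circ}$ is a normal subgroup of the derived group of the generic Mumford--Tate group, and in particular is semisimple. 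Therefore, if $\mathbf G^{\circ}\ne 1$ then $\mathbf G^{\circ}$ is a non-trivial connected semisimple group, so it is non-abelian and positive-dimensional; since $\mathrm{Im}(\rho)$ is Zariski-dense in $\mathbf G$, it is then infinite (otherwise $\mathbf G$ would be finite and $\mathbf G^{\circ}=1$) and non-abelian (otherwise $\mathbf G$ would be abelian and $\mathbf G^{\circ}=1$). So the entire problem reduces to showing that $\mathbf G^{\circ}\ne 1$, i.e.\ that the monodromy of $\mathbb H$ is infinite.

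The crux is therefore to rule out finite monodromy. Suppose $\mathbf G$ is finite. Passing to the finite \'etale cover $B'\to B$ attached to $\ker\rho$, the variation $\mathbb H_{B'}$ has trivial monodromy; in particular all its local monodromies around the boundary of a smooth projective compactification $\overline{B'}$ are trivial, so by Schmid's nilpotent orbit theorem $\mathbb H_{B'}$ extends to a polarized variation of Hodge structure on $\overline{B'}$. Having trivial monodromy over the connected projective (hence K\"ahler) manifold $\overline{B'}$, this extended variation is constant by the theorem of the fixed part; consequently the period map of $f$ is constant, i.e.\ all fibres of $f$ carry isomorphic polarized Hodge structures on $H^2$. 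By the Torelli theorem for elliptic surfaces of Kodaira dimension one (Chakiris, and subsequent work), a family with constant period point is isotrivial. But the classifying map $B\to M$ is quasi-finite with $\dim B>0$, so $f$ has infinitely many pairwise non-isomorphic members --- a contradiction. Hence $\mathbf G$ is infinite, $\mathbf G^{\circ}\ne 1$, and we are done.

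The main obstacle is the Torelli input above: one needs the implication ``constant period point $\Rightarrow$ isotrivial'' for elliptic surfaces of Kodaira dimension one, and some care is needed because the surfaces are not assumed to carry a section. This is handled by replacing $f$ with its relative Jacobian fibration --- using that the torsor (Brauer) class is locally constant, hence constant, in a connected family of projective surfaces, and that it leaves the rational Hodge structure on $H^2$ unchanged --- and then transferring the quasi-finiteness hypothesis (the set of projective torsors of a fixed Jacobian fibration is countable, so the fibres of $B\to\{\text{Jacobian fibrations}\}$ stay finite). Isolating a Torelli statement valid in the required generality, and treating any special loci on which it could degenerate, is where the genuine work lies; the reduction, the passage to monodromy, and the curvature/rigidity argument excluding finite monodromy are formal.
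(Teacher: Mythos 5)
Your reduction to $Z=B$, the passage from $\pi_1$ to the weight-two monodromy, and the rigidity step showing ``finite monodromy $\Rightarrow$ constant period map'' (via Andr\'e/Deligne semisimplicity, Schmid's nilpotent orbit theorem, and the theorem of the fixed part) are all sound and amount to a slightly more explicit version of what the paper compresses into ``Simpson's correspondence.'' The genuine gap is exactly the point you flag as ``where the genuine work lies'': your proof needs the implication ``constant period point on $H^2$ $\Rightarrow$ isotrivial'' for elliptic surfaces of Kodaira dimension one, and this is \emph{false}. The paper explicitly records that global Torelli fails in this setting, citing Chakiris \cite{chakiris-80} --- whose paper is titled \emph{Counterexamples} to global Torelli precisely for such elliptic surfaces --- so the reference you lean on delivers the opposite of what you need. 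Replacing $f$ by its relative Jacobian fibration does not cure this: the Jacobian of an elliptic surface of Kodaira dimension one is again such a surface, and the counterexamples persist. A non-isotrivial family of pairwise non-isomorphic elliptic surfaces with constant $H^2$-period can exist, and your chain of implications then derives no contradiction from quasi-finiteness of the classifying map.

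The paper's route around this is qualitatively different and uses two extra inputs that your proposal never touches. First, it factors $f$ through the relative Iitaka fibration $\calx\to\calw\to B$ and also keeps track of the weight-one variation of Hodge structure carried by the induced family of base curves $h:\calw\to B$; if that is non-constant one is already done. Second, when both the weight-one and weight-two period maps are constant, it does \emph{not} try to conclude isotriviality of the surfaces. Instead it invokes Oguiso--Viehweg's analysis (\cite[Proposition 5.1]{ov-01} and \cite[Lemma 5.2]{ov-01}) to deduce that the constancy of the N\'eron--Severi lattice forces the discriminant locus $D\subset\calw\cong B\times W$ to be \'etale over $B$ with locally equisingular fibres, i.e.\ $D$ is a disjoint union of sections $s_i:B\to W$. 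Quasi-finiteness of the classifying map then forces at least one $s_i$ to be non-constant, and the argument finishes by a case split on $g(W)$: for $W=\mathbb{P}^1$ three of the sections can be normalized to $0,1,\infty$ and a fourth maps $B$ non-constantly into $\mathbb{P}^1\setminus\{0,1,\infty\}$; for $g(W)=1$ one translates a section to the origin; for $g(W)\geq 2$ any non-constant map to $W$ suffices. In each case $B$ maps non-constantly to a hyperbolic curve, which gives the desired conclusion about $\pi_1(B)$. You would need to supply a substitute for this entire Oguiso--Viehweg/discriminant-locus argument; as written, the final step of your proof cannot be completed.

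Two smaller remarks. (1) Saito's result used in the paper is an \emph{infinitesimal} Torelli theorem, and it applies only when the elliptic fibration on a fibre is non-isotrivial and $\chi(\calo_X)\geq 2$; it does not extend to a global Torelli statement, and it does nothing in the isotrivial case (Case II), which is the hard one. (2) Your argument that infinite monodromy implies non-abelian monodromy via $\mathbf G^{\circ}$ being semisimple is correct but not literally necessary: for the paper's purposes it suffices to know that an abelian or finite $\pi_1$ carries no non-constant polarized variation of Hodge structure, which is what ``Simpson's correspondence'' is shorthand for.
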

We expect the base $B$ that appears in \autoref{main-thm2} to be topologically hyperbolic.
We hope that Theorem~\ref{main-thm2} serves as a first step toward studying the topological hyperbolicity of more general moduli spaces.
It is well-known that the Torelli theorem fails for
elliptic surfaces of Kodaira dimension one \cite{chakiris-80}.
In \cite{saito-83}, Saito provided a sufficient condition for the validity of the infinitesimal Torelli theorem for such elliptic surfaces. This significant contribution allows us to focus on the remaining cases where Saito's condition does not apply. In these instances, we will utilize Oguiso-Viehweg's elegant work on the analysis of the singular locus of elliptic surfaces (cf. \cite{ov-01}), along with Simpson's nonabelian Hodge theory, to complete the proof.\\[.3cm]

\noindent{\bf Acknowledgment.} Part of this paper was written during the visits of the second and third named authors to the School of Mathematical Sciences of ECNU in August 2018. They thank all the members there for their hospitality.




\section{Variation of Hodge structures and topological hyperbolicity}
In this section we prove Theorem~\ref{main-thm1}. We first recall the following well-known fact
\begin{prop}\label{prop-1-1}
  Let $X$ be a quasi-projetive variety carrying a polarized variation of Hodge structures $\mathbb{V}$. If the induced period map is nonconstant on $X$, then the image of the monodromy representation $\pi_1(X) \to \Gamma$ associated with $\mathbb{V}$ is an infinite and non-abelian group.\\
  In particular, $\pi_1(X)$ is infinite and non-abelian.
\end{prop}
One can find in {\cite[Theorem~3.1]{Voi_BOOK}} and {\cite[\S3, p.153]{Zuo96}} for a proof of the above proposition. For each subvariety $Z \subset X$ of positive dimension, the induced monodromy representation $\pi_1(\widetilde{Z}) \to \Gamma_Z$ associated with $\mathbb{V}|_Z$ has infinite and non-abelian image, provided that the induced period map of $\mathbb{V}$ is quasi-finite. Since
\[
\mathrm{Im}[\pi_1(\widetilde{Z}) \to \pi_1(X)] \twoheadrightarrow \mathrm{Im}[\pi_1(\widetilde{Z}) \to \Gamma_Z],
\]
we know that $\mathrm{Im}[\pi_1(\widetilde{Z}) \to \pi_1(X)]$, and consequently $\pi_1(\widetilde{Z})$ itself, is infinite and non-abelian. Therefore, a weak form of topological hyperbolicity (infinite and non-abelian $\pi_1(\widetilde{Z})$ for each $Z \subset X$) holds if the period map is quasi-finite.

To study the exponential growth property of fundamental groups, one needs a volume estimate of Hodge metrics, which was proved by Bakker-Tsimerman:
\begin{thm}[Theorem~1.2 in \cite{BT-Ax}]\label{BT-vol}
  There are constants $\beta, \rho >0$ only depending on the period domain $\mathcal{D}$ of $\mathbb{V}$ such that for any $r > \rho$, any $x \in \mathcal{D}$, and any positive-dimensional Griffiths transverse closed analytic subvariety $Z \subset B_x(r) \subset \mathcal{D}$, we have
  \[
  \mathrm{vol}(Z) \geq \mathrm{exp}(\beta r) \mathrm{mult}_xZ
\]
where $B_x(r)$ is the radius $r$ geodesic ball centered at $x$ and $\mathrm{vol}(Z)$ the volume with respect to the natural left-invariant metric on $\mathcal{D}$.
\end{thm}

\begin{proof}[Proof of \autoref{main-thm1}]
By virtue of Proposition~\ref{prop-1-1}, we only need to show the exponential growth of $\pi_1(Z)$, where $Z \subset X$ is a smooth connected closed subvariety of positive dimension.

If $\mathrm{dim}\,Z=1$, then $Z$ is a compact Riemann surface with genus $\geq 2$. The classical uniformization theorem equips $Z$ with a Riemannian metric with constant negative sectional curvature. Then by Milnor's theorem {\cite[Theorem~2]{Mil68}}, we know that $\pi_1(Z)$ has exponential growth. Thus one only needs to prove the case that $\mathrm{dim}\,Z \geq 2$. Using the Lefschetz hyperplane theorem, the general case can be reduced to the surface case. Hence we will assume $\mathrm{dim}\,Z=2$ in the rest of the proof.

Consider the restriction of the period map on $Z$:
\[
\varphi:\,Z \to Y \subset \Gamma \backslash \mathcal{D}.
\]
Since $\varphi$ is quasi-finite, we know that $Y$ is a complete normal surface with finitely many quotient singularities.

\begin{lem}
If $\pi_1(Y)$ has exponential growth, then $\pi_1(Z)$ has exponential growth.
\end{lem}
\begin{proof}
  Let $\hat{Y} \to Y$ be the desingularization of $Y$. Consider the following Cartesian square
  \[
    \xymatrix{
      \hat{Z} \ar[r]^{\hat{\varphi}} \ar[d] & \hat{Y} \ar[d] \\
      Z \ar[r]^{\varphi} & Y.
}
  \]
Since $Z$ is smooth and $Y$ has only quotient singularities, one has $\pi_1(\hat{Z}) = \pi_1(Z)$ and $\pi_1(\hat{Y}) = \pi_1(Y)$. On the other hand, $\hat{\varphi}$ is a surjective morphism between two smooth connnected varieties. Thus by {\cite[Lemma~1.5, B]{nori-83}} $\pi_1(\hat{Z}) = \pi_1(Z)$ is a finite index subgroup of $\pi_1(\hat{Y}) = \pi_1(Y)$, which finishes the proof.
\end{proof}

Hence we aim to show that $\pi_1(Y)$ has exponential growth. This will follow from the volume estimate of Bakker-Tsimerman (Theorem~\ref{BT-vol}) and some argument similar as Milnor's in \cite{Mil68}.

Denote by $Y_{\mathcal{D}}$ some irreducible component of the fiber product $Y \times_{\Gamma \backslash \mathcal{D}} \mathcal{D}$. Then $Y_{\mathcal{D}}$ is a closed analytic subvariety in the period domain $\mathcal{D}$. Moreover, $p:\,Y_{\mathcal{D}} \to Y$ is an unramified cover. That induces the following commutative diagram
\[
\xymatrix{
  \tilde{Y} \ar[r] \ar[rd] & Y_{\mathcal{D}} \ar[d]^{p} \ar@{^{(}->}[r] & \mathcal{D} \ar[d] \\
   & Y \ar@{^{(}->}[r] & \Gamma \backslash \mathcal{D}
  }
\]
where $\tilde{Y}$ is the universal cover of $Y$. Thus the image of the monodromy representation
\[
G:= \mathrm{Im}[\pi_1(Y) \to \Gamma]
\]
is exactly the group of deck transformations of $p:\,Y_{\mathcal{D}} \to Y$. We will prove that $G$ has exponential growth, and thus so does $\pi_1(Y)$.

Let $\Sigma=\{y_1,\dots,y_n\}$ be the finite set of quotient singularities of $Y$. The restriction of the Hodge metric (deduced from the natural left-invariant metric on $\mathcal{D}$) on $Y$ makes $Y^o:= Y \setminus \Sigma$ a K\"ahler submanifold in $\Gamma \backslash \mathcal{D}$. It is clear that the compactification $Y$ is a metric space with finite diameter.

Choose a base point $x \in Y_{\mathcal{D}}$ outside the discrete subset $p^{-1}(\Sigma)$. For each $r>0$, one can consider the neighbourhood
\[
N_x(r):= \{y \in Y_{\mathcal{D}}\,:\,d_h(x,y) \leq r\}
\]
where $d_h(\cdot,\cdot)$ is the distance function associated with the left-invariant metric $h$ on $\mathcal{D}$. Then $N_x(r) = Y_{\mathcal{D}} \cap B_x(r) \subset B_x(r)$. From Theorem~\ref{BT-vol}, one obtains
\begin{equation}
  \label{vol_ineq}
V(r) := \mathrm{vol}(N_x(r)) \geq \mathrm{exp}(\beta r) \mathrm{mult}_xN_x(r) = \mathrm{exp}(\beta r) \mathrm{mult}_xY_{\mathcal{D}}
\end{equation}
for $r$ sufficiently large.

Let $\delta$ be the diameter of $Y$. Then the cover map $p:\,Y_{\mathcal{D}} \to Y$
maps the neighbourhood $N:=N_x(\delta)$ onto $Y$. Hence the set of all translates
\[
\{gN\,:\,g \in G\}
\]
forms a covering of $Y_{\mathcal{D}}$.

We claim that there are only finitely many $g \in G$ such that $gN$ intersects with $N$. For otherwise, $N_x(2\delta + \varepsilon)$ would contain infinitely many $gN$, and consequently, $N_x(2\delta + \varepsilon)$ would contain infinitely many disjoint set $gN_x(\varepsilon)$ for $\varepsilon$ sufficiently small. This is impossible since $N_x(2\delta + \varepsilon)$ has finite volume.

Now let $F:=\{g \in G\,:\,gN \cap N \neq \varnothing\}$, which is a finite set by the above argument. Let $\nu$ be the minimum of $d_h(gN,N)$ as $g$ ranges over $G \setminus F$. Since $G \subset \Gamma$ is a discrete subgroup, we know that $\nu>0$.

\begin{lem}[Lemma~2 in \cite{Mil68}]\label{word-est}
If $d_h(x,gN) < \nu t + \delta$ for some positive integer $t$, then $g$ can be expressed as a $t$-fold product, $g= f_1f_2\cdots f_t$ with $f_1,\cdots,f_t \in F$.
\end{lem}
The proof of Lemma~2 in \cite{Mil68} relies solely on the distance function and definitions of $F$ and $\nu$, and can be applied directly to our setting without any modifications. Therefore, we omit the proof of the above lemma.

Lemma~\ref{word-est} implies that elements in $F$ generate the group $G$. Let $\ell(t)$ be the growth function of $G$ using $F$ as the set of generators.

According to Lemma~\ref{word-est}, the interior of $N_{\nu t + \delta}(x)$ is completely covered by the translates $f_1f_2\cdots f_tN$ with $f_i \in F$. Note that $\ell(t)$ is by definition the number of distinct group elements which can be expressed as such $t$-fold products. Therefore,
\begin{equation}
  \label{word-vol-est}
\ell(t)V(\delta) \geq V(\nu t + \delta).
\end{equation}

Combining \eqref{vol_ineq} and \eqref{word-vol-est}, we obtain the inequality
\[
\ell(t) \geq \frac{e^{\beta \delta}\mathrm{mult}_xY_{\mathcal{D}}}{V(\delta)} \mathrm{exp}(\beta \nu t),
\]
which confirms the exponential growth of the group $G$. This completes the proof.
\end{proof}

\section{Topological hyperbolicity of moduli spaces of elliptic surfaces}
\subsection{Infinitesimal Torelli Theorem}\label{infinitesimal Torelli}
The (infinitesimal) Torelli problems for elliptic surfaces have been extensively studied, see for instance \cite{Ki78,saito-83,Kl04}.
Unfortunately, the infinitesimal Torelli does not hold for all elliptic surfaces.
Nevertheless, we still have the infinitesimal Torelli for certain families of elliptic surfaces.
For the readers' convenience, we recall the next theorem and sketch its proof following Saito's idea \cite{saito-83}.
We also refer to \cite{bhpv} for the general facts about elliptic surfaces.

\begin{thm}[Saito]\label{saito}
	Let $\ell:\,X\to W$ be a relative minimal elliptic surface without multiple fiber.
	Assume that $\chi(\calo_X)\geq 2$ and $\ell$ is non-isotrivial. Then the infinitesimal Torelli theorem holds for $X$, i.e.,
	the following infinitesimal period map is injective.
	\[
	\delta:\,H^1(X,T_X) \to \text{Hom}\big(H^{2,0}(X),H^{1,1}(X)\big) \oplus \text{Hom}\big(H^{1,1}(X),H^{0,2}(X)\big).
	\]
\end{thm}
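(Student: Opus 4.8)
The plan is to analyze the period map $\delta$ by decomposing it through the Hodge structure on $H^2(X)$ and reducing injectivity to a statement about multiplication of sections on the elliptic fibration. First I would recall that for an elliptic surface $g\colon X\to W$ without multiple fibers, the canonical bundle formula gives $\omega_X = g^*(\omega_W\otimes L)$ for the fundamental line bundle $L = (R^1g_*\calo_X)^\vee$ on $W$, with $\deg L = \chi(\calo_X)$. Hence $H^{2,0}(X) = H^0(X,\omega_X) = H^0(W, \omega_W\otimes L)$, and the hypothesis $\chi(\calo_X)\geq 2$ ensures this space is nonzero and, more importantly, that $L$ is sufficiently positive for the base-curve multiplication maps to be surjective (this is where $\chi(\calo_X)\geq 2$ rather than $\geq 1$ is used). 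The strategy is then to show that the first component of $\delta$, namely the cup-product pairing
\[
H^1(X,T_X)\otimes H^0(X,\Omega^2_X)\longrightarrow H^1(X,\Omega^1_X),
\]
already detects all deformations, i.e. its adjoint $H^1(X,T_X)\to \mathrm{Hom}(H^{2,0},H^{1,1})$ is injective.

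The key steps, in order, would be: (1) Use the exact sequences relating $T_X$, the relative tangent sheaf $T_{X/W}$, and $g^*T_W$, together with the Leray spectral sequence, to split $H^1(X,T_X)$ into a "moduli of the fibration" part and a "moduli of the base plus the elliptic fibers" part; the non-isotriviality hypothesis guarantees the relevant $H^0(W, R^1g_*T_{X/W})$-type term is nontrivial and governs the variation. (2) Identify $H^1(X,\Omega^1_X)$, modulo the fixed part coming from $W$ and from the fiber class, with $H^1(W, g_*\Omega^1_{X/W})$ together with $H^0(W, R^1g_*\Omega^1_{X/W})$, and similarly express the cup product in terms of Kodaira–Spencer classes on $W$ tensored against sections of $\omega_W\otimes L$. (3) Reduce the injectivity of $\delta$ to the surjectivity (or generic injectivity) of a multiplication map of the shape $H^0(W, \omega_W\otimes L)\otimes H^0(W, L^{\otimes ?})\to H^0(W, \cdots)$, i.e. to a statement about the projective normality / base-point-freeness of line bundles on the curve $W$ of degree controlled by $\chi(\calo_X)$. (4) Invoke the classical fact (e.g. the base-point-free pencil trick, or Green's $H^0$-lemma) that on a curve such multiplication maps are surjective once the degrees exceed the explicit bound forced by $\chi(\calo_X)\geq 2$, and handle the finitely many low-genus / hyperelliptic exceptions for $W$ directly. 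Throughout, I would follow Saito's dévissage \cite{saito-83}, keeping track of the contribution of the singular fibers via the conductor / discriminant divisor on $W$.

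The main obstacle I expect is Step (3)–(4): controlling the multiplication map on $W$ precisely enough that $\chi(\calo_X)\geq 2$ suffices, while simultaneously accounting for the singular fibers of $g$ (which contribute extra classes to $H^{1,1}(X)$ coming from the components of reducible fibers and which must be shown not to lie in the kernel of the adjoint map). A subtlety is that when $g$ has no section, or when the $j$-map is constant on components, the naive identification of $R^1g_*\Omega^1_{X/W}$ acquires torsion supported on the discriminant, and one must verify that the cup product still sees deformations of the singular-fiber configuration; the hypothesis that $g$ is non-isotrivial (so the $j$-map is non-constant) is exactly what rescues this. Once the curve-level multiplication statement is in hand, assembling injectivity of $\delta$ from injectivity on each graded piece of the Leray filtration is formal, using the strictness of the filtration with respect to the Hodge structure.
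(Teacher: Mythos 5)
Your proposal follows essentially the same route as the paper, which itself follows Saito's dévissage: dualize $\delta$ to the cup-product $\mu\colon H^0(\Omega^2_X)\otimes H^1(\Omega^1_X)\to H^1(\Omega^1_X\otimes\Omega^2_X)$, decompose via the Leray spectral sequence for $g\colon X\to W$ (producing a torsion sheaf $T$ supported on the discriminant, exactly as you anticipate), and reduce to positivity of $\Omega^1_W\otimes g_*\omega_{X/W}$ coming from $\chi(\calo_X)\geq 2$. One small simplification you could make: the paper does not need surjectivity of a multiplication map $H^0\otimes H^0\to H^0$ (so no base-point-free pencil trick, Green's lemma, or hyperelliptic case analysis) — since the relevant target is a torsion sheaf, mere base-point-freeness of $|\Omega^1_W\otimes g_*\omega_{X/W}|$ suffices, and this follows from the Riemann--Roch degree bound $\deg(\Omega^1_W\otimes g_*\omega_{X/W})\geq 2g(W)$; also, the boundary case $g(W)=0$, $\chi(\calo_X)=2$ (where $X$ is K3 and a vanishing in the Leray argument fails) is handled separately, which your sketch omits.
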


Recall that $\ell$ being non-isotrivial means that the fibration $\ell$ cannot become trivial after any finite base change.
In our case, this is also equivalent to saying that
the functional invariant $J(X)$ is not constant as in \cite{saito-83}.
By the Serre duality, one obtains immediately the following.
\begin{lem}[{\cite[Lemma 3.1]{saito-83}}]
The period map $\delta$ is injective if and only if the cup-product map
\begin{equation}
  \label{cup-product}
  \mu:\, H^0(X,\Omega^2_X)\otimes H^1(X,\Omega^1_X) \to H^1(X,\Omega^1_X \otimes \Omega^2_X)
\end{equation}
is surjective.
\end{lem}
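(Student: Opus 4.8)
The statement follows from Serre duality on the surface $X$, and the plan is to identify $\mu$ (up to sign) with the transpose of the $H^{2,0}\to H^{1,1}$ component of $\delta$. First I would record the numerical identifications available because $\dim X=2$: one has $\Omega^2_X=\omega_X$ and, for the rank-two bundle $\Omega^1_X$, the canonical isomorphism $T_X\otimes\Omega^2_X\cong\Omega^1_X$. Hence the $H^{2,0}\to H^{1,1}$ component of $\delta$ is the map
\[
\delta_1:\ H^1(X,T_X)\ \longrightarrow\ \mathrm{Hom}\bigl(H^0(X,\Omega^2_X),\,H^1(X,\Omega^1_X)\bigr)
\]
adjoint to the cup product $H^1(X,T_X)\otimes H^0(X,\Omega^2_X)\to H^1(X,T_X\otimes\Omega^2_X)=H^1(X,\Omega^1_X)$, while the $H^{1,1}\to H^{0,2}$ component is the analogous map $\delta_2:\,H^1(X,T_X)\to\mathrm{Hom}\bigl(H^1(X,\Omega^1_X),H^2(X,\calo_X)\bigr)$ obtained by cup product followed by the contraction $\Omega^1_X\otimes T_X\to\calo_X$.

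Next I would dualize $\mu$. Serre duality on $X$, together with the surface identity above, gives $H^1(X,\Omega^1_X\otimes\Omega^2_X)^\vee\cong H^1(X,T_X)$, $H^0(X,\Omega^2_X)^\vee\cong H^2(X,\calo_X)$, and the self-duality $H^1(X,\Omega^1_X)^\vee\cong H^1(X,\Omega^1_X)$. Under these identifications the transpose $\mu^\vee$ is a linear map $H^1(X,T_X)\to\mathrm{Hom}\bigl(H^0(X,\Omega^2_X),H^1(X,\Omega^1_X)\bigr)$, and the heart of the argument is that $\mu^\vee$ is exactly $\delta_1$. Granting this, $\mu$ is surjective iff $\mu^\vee$ is injective iff $\delta_1$ is injective. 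To upgrade this to the full statement about $\delta$, I would invoke the standard self-duality of the infinitesimal variation of Hodge structure on $H^2(X)$ with respect to the cup-product polarization on $H^{1,1}$: for $\xi\in H^1(X,T_X)$, $\omega\in H^{2,0}(X)$, $\eta\in H^{1,1}(X)$ one has $\langle\delta_2(\xi)(\eta),\omega\rangle=\pm\langle\xi\cup\omega,\eta\rangle$ (Serre pairing on the right), so $\ker\delta_1=\ker\delta_2$ and therefore $\delta$ is injective iff $\delta_1$ is.

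The only point that needs care—rather than a genuine obstacle, which is why the reduction is "immediate"—is the compatibility of all these Serre-duality identifications. Concretely, one must check that the trilinear pairing
\[
H^0(X,\Omega^2_X)\otimes H^1(X,\Omega^1_X)\otimes H^1(X,T_X)\ \longrightarrow\ H^2(X,\Omega^2_X)\cong\C,
\]
obtained either as $\langle\mu(\omega\otimes\eta),\xi\rangle$ via Serre duality or by cup product followed by the contraction $\Omega^1_X\otimes T_X\to\calo_X$, agrees up to sign with $\langle\delta_1(\xi)(\omega),\eta\rangle$. This is a bilinear-algebra bookkeeping computation, cleanest with Dolbeault representatives or in \v{C}ech cohomology, and it relies on the fact that on a surface every relevant contraction is controlled by the single isomorphism $T_X\otimes\Omega^2_X\cong\Omega^1_X$. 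Once this symmetry is recorded, the equivalence asserted in the lemma is immediate.
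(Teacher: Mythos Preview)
Your proposal is correct and follows exactly the approach the paper indicates: the paper gives no proof beyond the remark ``By the Serre duality, one obtains immediately the following'' and a citation to \cite[Lemma~3.1]{saito-83}, and your argument is precisely the standard unpacking of that remark---dualizing $\mu$ via Serre duality to identify $\mu^\vee$ with $\delta_1$, together with the observation that $\ker\delta_1=\ker\delta_2$ because the polarization makes $\delta_2(\xi)$ the transpose of $\delta_1(\xi)$.
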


\begin{proof}[Proof of \autoref{saito}]
	It suffices to prove the map $\mu$ in \eqref{cup-product} is surjective.
    Since the elliptic fibration $\ell:\,X \to W$ is non-isotrivial, one has
    \[
    q(X) = g(W) + h^1(\ell_*\omega_{X/W} \otimes \omega_W) = g(W) + h^0\left( (\ell_*\omega_{X/W})^{\vee} \right) = g(W).
    \]

    If $g(W)=0$ and $\chi(\calo_X)=2$, then $q(X)= g(W)=0$ and $p_g(X)=\chi(\calo_X)+q(X)-1=1$. It follows that $X$ is a K3 surface,
	and hence \eqref{cup-product} is surjective. Therefore, we may assume that
	\begin{equation}\label{eqn-4-1}
		\chi(\calo_X)\geq 3,\qquad \text{if~}g(W)=0.
	\end{equation}

Since the surface we are concerned about admits an elliptic fibration $\ell:\, X \to W$, we can use the Leray spectral sequence to decompose (\ref{cup-product}).
\begin{equation*}
0 \to H^1(W,\ell_*\Omega^1_X) \to H^1(X,\Omega^1_X) \to H^0(W,R^1\ell_*\Omega^1_X) \to 0,
\end{equation*}
\begin{equation*}
0 \to H^1(W,\ell_*\Omega^1_X \otimes \Omega^2_X) \to H^1(X,\Omega^1_X \otimes \Omega^2_X) \to H^0(W,R^1\ell_*(\Omega^1_X \otimes \Omega^2_X)) \to 0.
\end{equation*}

Note that the cup-product $\mu$ is compatible with the Leray spectral sequence, see $(12.2.6)$ in E.G.A III \cite{ega-3}. Therefore, $\mu$ is surjective if and only if the following cup-products $\mu_1$, $\mu_2$ are surjective.
\begin{equation}
  \label{mu-1}
  \mu_1:\, H^0(W,\ell_*\Omega^2_X) \otimes H^1(W,\ell_*\Omega^1_X) \to H^1(W,\ell_*(\Omega^1_X \otimes \Omega^2_X)),
\end{equation}
\begin{equation}
  \label{mu-2}
  \mu_2:\, H^0(W,\ell_*\Omega^2_X) \otimes H^0(W,R^1\ell_*\Omega^1_X) \to H^0(W,R^1\ell_*(\Omega^1_X \otimes \Omega^2_X)).
\end{equation}

From {\cite[Lemma~4.2,~(4.10)]{saito-83}} we obtain the following short exact sequence
\begin{equation}
  \label{fund-ses}
0 \to \ell^*\Omega^1_W \to \Omega^1_X \to \mathcal{I} \otimes \Omega^1_{X/W} \to 0,
\end{equation}
in the case when $\ell$ has no multiple fiber. We refer to {\cite[(4.8)]{saito-83}} for the precise definition of the ideal sheaf $\mathcal{I}$ on $X$. Consider the long exact sequence associated with \eqref{fund-ses}
\[
0 \to \Omega^1_W \to \ell_*(\Omega^1_X) \xrightarrow{\alpha} \ell_*(\mathcal{I} \otimes \Omega^1_{X/W}) \xrightarrow{\delta} R^1\ell_*(\ell^*\Omega^1_W)=\Omega^1_W \otimes R^1\ell_*(\calo_X).
\]
Since the edge map $\delta$ factors through the Kodaira-Spencer map associated with the fibration $\ell$, the non-isotriviality of $\ell$ implies that $\delta \neq 0$. On the other hand, the sheaf $\ell_*(\mathcal{I} \otimes \Omega^1_{X/W})$ has generic rank one. Thus the image of $\alpha$ is torsion, which means that $\ell_*(\Omega^1_X)$ also has generic rank one.

Now we tensor \eqref{fund-ses} with $\Omega^2_X$ and produce the following long exact sequence:
\[
0 \to \Omega^1_W \otimes \ell_*\Omega^2_X \to \ell_*(\Omega^1_X \otimes \Omega^2_X) \xrightarrow{\beta} \ell_*(\mathcal{I} \otimes \Omega^1_{X/W}\otimes \Omega^2_X) \to \cdots.
\]
From the canonical bundle formula (cf. {\cite[Theorem~12.1]{bhpv}}) we notice that $\Omega^2_X= \ell^*\mathcal{L}$ for some invertible sheaf $\mathcal{L}$ on $W$. Then $\ell_*(\Omega^1_X \otimes \Omega^2_X) = \ell_*\Omega^1_X \otimes \mathcal{L}$, which also has generic rank one. This implies that $\beta$ has torsion image, and consequently one has
\[
H^1(W, \Omega^1_W \otimes \ell_*\Omega^2_X) \twoheadrightarrow H^1(W, \ell_*(\Omega^1_X \otimes \Omega^2_X)).
\]
By the Serre duality and canonical bundle formula, one has
\[
H^1(W, \Omega^1_W \otimes \ell_*\Omega^2_X) = H^0(W, (\ell_*\Omega^2_X)^{-1}) = H^0(W, \mathcal{L}^{\vee}),
\]
where $\mathcal{L}$ is a line bundle of degree $\chi(\calo_X)-2\chi(\calo_W)$ on $W$ (cf. {\cite[Corollary~12.3]{bhpv}}). Recall the assumption $\chi(\calo_X) \geq 2$ and \eqref{eqn-4-1}. Under these conditions $\mathcal{L}$ is a line bundle with positive degree on $W$. Thus $H^1(W, \Omega^1_W \otimes \ell_*\Omega^2_X) = H^0(W, \mathcal{L}^{\vee})=0$, which implies that $H^1(W, \ell_*(\Omega^1_X \otimes \Omega^2_X))=0$. Therefore, the cup-product $\mu_1$ in (\ref{mu-1}) is automatically surjective.

Now we analyze $R^1\ell_*\Omega^1_X$. Using the long exact sequence associated with \eqref{fund-ses} again, one obtains
\[
\ell_*(\mathcal{I} \otimes \Omega^1_{X/W}) \xrightarrow{\delta} \Omega^1_W \otimes R^1\ell_*(\calo_X) \to R^1\ell_*\Omega^1_X \to R^1\ell_*(\mathcal{I} \otimes \Omega^1_{X/W}) \to 0
\]
which implies
\begin{equation}
  \label{coker-ses}
0 \to \mathrm{Coker}(\delta) \to R^1\ell_*\Omega^1_X \to R^1\ell_*(\mathcal{I} \otimes \Omega^1_{X/W}) \to 0.
\end{equation}

As we discussed before, the non-isotriviality of $\ell$ implies that $\delta \neq 0$, and thus $\mathrm{Coker}(\delta)$ is torsion. On the other hand, using the canonical bundle formula one computes
\[
R^1\ell_*(\mathcal{I} \otimes \Omega^1_{X/W}) = R^1\ell_*(\mathcal{I} \otimes \omega_{X/W}) = R^1\ell_*\left(\mathcal{I} \otimes \ell^*(R^1\ell_*\calo_X)^{\vee}\right) = R^1\ell_*\mathcal{I} \otimes (R^1\ell_*\calo_X)^{\vee}.
\]
From {\cite[Proposition~4.3,~(ii)]{saito-83}}, we know that $R^1\ell_*\mathcal{I} = R^1\ell_*\calo_X \oplus T_1$, where $T_1$ is a torsion sheaf on $W$. Thus
\[
R^1\ell_*(\mathcal{I} \otimes \Omega^1_{X/W}) = \calo_W \oplus \textrm{some torsion sheaf}.
\]
By \eqref{coker-ses} $R^1\ell_*\Omega^1_X$ has the same form. Write
\[
R^1\ell_*\Omega^1_X = \calo_W \oplus T,
\]
where $T$ is the torsion part.

From the above discription of $R^1\ell_*\Omega^1_X$ one easily deduces the following isomorphisms:
\begin{equation}
  \label{isom-3}
 H^0(W, R^1\ell_*\Omega^1_X) \cong H^0(W,\calo_W) \oplus H^0(W,T),
\end{equation}
\begin{equation}
  \label{isom-4}
 H^0(W, R^1\ell_*(\Omega^1_X \otimes \Omega^2_X)) \cong H^0(W,\Omega^1_W \otimes \ell_*\omega_{X/W}) \oplus H^0(W,T \otimes \Omega^1_W \otimes \ell_*\omega_{X/W}).
\end{equation}

By (\ref{isom-3}) and (\ref{isom-4}), we can decompose the cup-product $\mu_2$ in (\ref{mu-2}) into two maps:
\begin{equation*}
  \label{mu-2-1}
  \mu^1_2:\, H^0(W,\Omega^1_W \otimes \ell_*\omega_{X/W}) \otimes H^0(W,\calo_W) \to H^0(W,\Omega^1_W \otimes \ell_*\omega_{X/W}),
\end{equation*}
\begin{equation*}
  \label{mu-2-2}
  \mu^2_2:\, H^0(W,\Omega^1_W \otimes \ell_*\omega_{X/W}) \otimes H^0(W,T) \to H^0(W,T \otimes \Omega^1_W \otimes \ell_*\omega_{X/W}).
\end{equation*}
Here we also use the canonical bundle formula to get $\ell_*\Omega^2_X= \Omega^1_W \otimes \ell_*\omega_{X/W}$.

The surjectivity of $\mu^1_2$ is obvious. And one can conclude immediately $\mu^2_2$ is surjective once we know that $\Omega^1_W \otimes \ell_*\omega_{X/W}$ has no base point, which follows from the next lemma. This completes the proof.
\end{proof}

\begin{lem}\label{base-point-free}
  If $\chi(\calo_X)\geq 2$ and $\ell$ is non-isotrivial, then $|\Omega^1_W \otimes \ell_*\omega_{X/W}|$ is base point free.
\end{lem}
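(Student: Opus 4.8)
The plan is to reduce the statement to an elementary fact about line bundles on the base curve $W$; the only geometric input is Kodaira's canonical bundle formula, which is where the hypothesis ``no multiple fiber'' enters.

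First I would identify the sheaf $g_{*}\omega_{X/W}$. Since the (relatively minimal) elliptic fibration $g$ has no multiple fibers, Kodaira's canonical bundle formula gives $\omega_{X/W}\cong g^{*}\calm$ for a line bundle $\calm$ on $W$, and the projection formula then yields $g_{*}\omega_{X/W}\cong\calm$; moreover $\calm\cong(R^{1}g_{*}\calo_{X})^{-1}$, and combining the Leray spectral sequence for $\calo_{X}$ with Riemann--Roch on $W$ gives $\deg\calm=\chi(\calo_{X})$. Hence, writing $g(W)$ for the genus of $W$ and using $\Omega^{1}_{W}=\omega_{W}$, the line bundle in question has degree
\[
\deg\big(\Omega^{1}_{W}\otimes g_{*}\omega_{X/W}\big)=2g(W)-2+\chi(\calo_{X})\ \geq\ 2g(W),
\]
where the inequality is precisely the hypothesis $\chi(\calo_{X})\geq 2$.

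It then remains to invoke the standard fact that a line bundle of degree $\geq 2g$ on a smooth projective curve of genus $g$ is globally generated, which follows from Riemann--Roch and Serre duality (for a point $p\in W$ the dual of $H^{1}$ of the twist by $-p$ has negative degree, hence vanishes, so $p$ is not a base point). Applying this to $\Omega^{1}_{W}\otimes g_{*}\omega_{X/W}$ gives the asserted base point freeness.

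I do not anticipate a genuine obstacle: essentially all the content is in the identification $g_{*}\omega_{X/W}\cong\calm$ with $\deg\calm=\chi(\calo_{X})$, which is exactly where the absence of multiple fibers is used. Note that the non-isotriviality hypothesis is in fact not needed for this lemma; it is simply inherited from the statement of \autoref{saito}. If one prefers to avoid citing Kodaira's formula explicitly, the very same degree $2g(W)-2+\chi(\calo_{X})$ can be read off from the isomorphism \eqref{isom-2} together with the computations already performed in the proof of \autoref{saito}, with an identical conclusion.
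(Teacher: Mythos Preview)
Your proof is correct and reaches the same endpoint as the paper --- namely the inequality $\deg(\Omega^{1}_{W}\otimes g_{*}\omega_{X/W})\geq 2g(W)$ followed by the standard global generation criterion --- but the route differs. The paper argues indirectly: it writes $p_{g}(X)=h^{0}(W,\Omega^{1}_{W}\otimes g_{*}\omega_{X/W})$, applies Riemann--Roch on $W$, invokes non-isotriviality to force $h^{1}(W,\Omega^{1}_{W}\otimes g_{*}\omega_{X/W})=h^{0}(W,(g_{*}\omega_{X/W})^{-1})=0$, and then uses $p_{g}(X)\geq q(X)+1=g(W)+1$ to extract the degree bound. You instead compute the degree directly via Kodaira's canonical bundle formula, obtaining the exact value $2g(W)-2+\chi(\calo_{X})$. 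Your approach is shorter and, as you observe, shows that the non-isotriviality hypothesis is not actually needed for this lemma; the paper's proof genuinely uses it to kill the $h^{1}$ term. The only part of your write-up I would drop is the closing remark about reading off the degree from \eqref{isom-2}: that isomorphism by itself does not yield the degree, and the computations in the proof of \autoref{saito} that you allude to in fact \emph{invoke} the present lemma rather than supply an alternative to it.
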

\begin{proof}
First we note that
$$p_g(X) = h^0(X,\Omega^2_X)=h^0(W,\ell_*\Omega^2_X)=h^0(W,\Omega^1_W \otimes \ell_*\omega_{X/W}).$$
By the Riemann-Roch theorem,
\[
p_g(X)=h^1(W,\Omega^1_W \otimes \ell_*\omega_{X/W}) +(1-g(W)) + \text{deg}(\Omega^1_W \otimes \ell_*\omega_{X/W}).
\]
Since $\ell:\, X \to W$ is non-isotrivial,
it follows that $\ell_*\omega_{X/W}$ is a line bundle with positive degree on $W$, and thus
$$h^1(W,\Omega^1_W \otimes \ell_*\omega_{X/W})=h^0(W,(\ell_*\omega_{X/W})^{-1})=0.$$
By the assumption,
$$p_g(X) \geq q(X)+1=g(W) +1.$$ Hence
\[
\text{deg}(\Omega^1_W \otimes \ell_*\omega_{X/W}) \geq 2g(W).
\]
By \cite[Corollary\,3.3]{Hartshorne-77},
we conclude that $|\Omega^1_W \otimes \ell_*\omega_{X/W}|$ is base point free.
\end{proof}


\subsection{Relative Iitaka fibration and the proof of \autoref{main-thm2}}
In this section, we aim to prove \autoref{main-thm2}. First we observe that restricting the family of elliptic surfaces to any positive dimensional subvariety it still has maximal variation since the classifying map from $B$ to the coarse moduli space is quasi-finite. So after replacing the original family by this restriction, we only need to show that $\pi_1(B)$ is infinite and nonabelian.

The proof combines Saito's infinitesimal Torelli theorem and Simpson's nonabelian Hodge theory. It should be emphasized that we use not only the weight two period mapping associated to the family of elliptic surfaces, but also the weight one period mapping associated to the family of curves induced by the relative Iitaka fibration. Oguiso-Viehweg's analysis on the singular locus of the elliptic surfaces is also crucial to our argument.


\begin{proof}[Proof of \autoref{main-thm2}]
The factorization of $f:\,\calx \to B$ by the Iitaka fibration gives a family of curves $h:\,\calw \to B$:
$$\xymatrix{
	\calx \ar[rr]^-{\ell} \ar[dr]_-{f} && \calw \ar[dl]^-{h}\\
	&B&} $$
The family $h$ induces a classifying map,
which is denoted by $\varphi_1:\,B \to \calm_{g(\calw_b)}$.
If the image of $\varphi_1$ in $\calm_{g(\calw_b)}$ has positive dimension,
then the family $h$ gives a non-trivial weight-1 variation of Hodge structures
on $B$.
Hence $\pi_1(B)$ is infinite and non-abelian;
since otherwise,
there exists no non-trivial variation of Hodge structures on $B$ by the Simpson's correspondence,
which contradicts the non-triviality of the weight-1 variation of Hodge structures.
Thus one can concentrate on the case of $\text{dim}(\varphi_1(B))=0$. Since $B$ is connected, we know that $\varphi_1(B)$ is just a point. So after a finite \'etale base-change one can assume that $\calw \cong B \times \calw_b$.

Let $b\in B$ be a general point, and $\ell_b:\,\calx_b \to \calw_b$ the corresponding elliptic fibration.
For simplicity we may omit the parameter $b$,
and denote by $\ell:\,X \to W$ a general member in the family.
We will divide the proof into the following two cases.

{\bf \vspace{2mm}
	 Case I:~ the elliptic fibration $\ell$ is non-isotrivial}.
	
   Since $\ell:\,X \to W$ is non-isotrivial, we know that the direct image sheaf $f_*\omega_{X/W}$ has positive degree, and thus
   \[
   \begin{array}{rl}
     p_g(X) - q(X) & = \chi(f_*\omega_{X/W} \otimes \omega_W) - g(W) \\
                 & = \mathrm{deg}(f_*\omega_{X/W} \otimes \omega_W) + 1 -2g(W) \\
                 & = \mathrm{deg}(f_*\omega_{X/W}) -1 \geq 0.
   \end{array}
   \]
Thus one gets $\chi(\calo_X)=p_g(X)-q(X)+1>0$.
	If $g(W)\geq 1$, one can do the \'etale base change of $\calw$ freely so that $\chi(\calo_X) \geq 2$, since $\calw \cong B \times W$ with $g(W) \geq 1$;
	if $g(W)=0$, then $\chi(\calo_X)\geq 3$ since $X$ is of Kodaira dimension one.
	Thus Saito's infinitesimal Torelli \autoref{saito} holds.
	Let $$\varphi_2:\,B \to \cald/\Gamma$$
	be the period map induced by the family $f:\,\calx \to B$.
	Then $\dim \varphi_2(B)>0$; otherwise
	the weight-$2$ period map $\varphi_2$ is constant, which means that the infinitesimal period map $\delta=0$, contradicting \autoref{saito}.
	This shows $\pi_1(B)$ is infinite and non-abelian
	by Proposition \ref{prop-1-1}.
	

{\bf \vspace{2mm}
	Case II:~ the elliptic fibration $\ell$ is isotrivial}.

First note that the fundamental group $\pi_1(B)$ is infinite and non-abelian
if either $\varphi_1$ or $\varphi_2$ is a non-constant map.
So we can concentrate on the case that both $\varphi_1$ and $\varphi_2$ are constant.
To go further, we need a result due to Oguiso and Viehweg:
\begin{prop}[{\cite[Proposition 5.1]{ov-01}}] \label{sing-locus}
Let $f:\calx \to \calw \to B$ be a smooth projective family of minimal elliptic surfaces with $\chi(\calo_{\calx_b}) \geq 2$ and $\kappa(\calx_b)=1$ for all $b \in B$. Assume that the weight-2 period map $\varphi_2$ is constant. Let $D$ be the discriminant locus in $\calw$. Then $D$ is \'etale over $B$.
\end{prop}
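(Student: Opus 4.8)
The plan is to reduce first to the case where the elliptic fibration $g$ is isotrivial, and then to use the explicit description of isotrivial elliptic surfaces as resolved quotients of products together with the constancy of $\varphi_2$. So I would start by disposing of the non-isotrivial case: if $g_b$ is non-isotrivial for some --- hence, by openness, for general --- $b\in B$, then the hypothesis $\chi(\calo_{\calx_b})\geq 2$ lets me invoke \autoref{saito}, so the infinitesimal period map $\delta$ is injective. Since the differential of $\varphi_2$ at such a $b$ factors as $T_bB\to H^1(\calx_b,T_{\calx_b})\xrightarrow{\delta}\mathrm{Hom}(H^{2,0},H^{1,1})\oplus\mathrm{Hom}(H^{1,1},H^{0,2})$ with first arrow the Kodaira--Spencer map of $f$, constancy of $\varphi_2$ forces that Kodaira--Spencer map to vanish on a dense open subset of $B$, hence identically, so $f$ is isotrivial as a family of surfaces and the assertion is clear. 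Thus I may assume $g$ is isotrivial, i.e.\ its functional invariant $J$ is a constant $j_0\in\mathbb{C}$; by Kodaira's classification, and because there are no multiple fibres, the only singular fibres occurring in any $g_b$ are of type $I_0^*$, together with $II, IV, IV^*, II^*$ when $j_0=0$ and $III, III^*$ when $j_0=1728$.

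Next I would pass to a finite base change --- harmless, since étaleness of $D\to B$ and local equisingularity of $g^{-1}(D)\to D$ may be tested étale-locally on $B$ --- after which $\calx_b$ is the minimal resolution of $(\widetilde{\calw}_b\times E_0)/G$, where $E_0$ is the fixed elliptic curve with $j(E_0)=j_0$, $G\subseteq\mathrm{Aut}(E_0,O)$ is cyclic of order $n\in\{2,3,4,6\}$ acting diagonally, and $p_b\colon\widetilde{\calw}_b\to\calw_b$ is a $G$-cover branched exactly along $D_b$ whose ramification index over a point of $D_b$ determines the Kodaira type of the fibre there. Consequently a degeneration of the discriminant over some $b_0\in B$ --- two points of $D_b$ colliding as $b\to b_0$, or a single ramification index jumping --- makes the family of covers $\widetilde{\calw}\to B$ non-smooth over $b_0$, so that the fibre $\widetilde{\calw}_{b_0}$ is a nodal curve carrying a $G$-invariant space of vanishing cycles that meets every isotypic component of $H^1$.

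Then I would bring in $\varphi_2$. Since resolution and quotient leave the transcendental part unchanged, $T(\calx_b)$ is, up to a Tate twist, the $\chi$-isotypic part of $H^1(\widetilde{\calw}_b,\mathbb{Q})$ for the character $\chi$ by which $G$ acts on $H^{1,0}(E_0)$ --- equivalently, it is the Hodge structure of the isotypic Prym variety $P_b:=\mathrm{Prym}_\chi(p_b)$. Constancy of $\varphi_2$ makes the polarized Hodge structure $T(\calx_b)$, hence $P_b$ up to isogeny, independent of $b$; in particular the Néron model of $P$ over any smooth curve in $B$ is an abelian scheme, so $P_b$ never acquires a toric part, contradicting the degenerations above. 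Hence $D\to B$ is unramified; since no singular fibre can appear or disappear (the topological Euler number $e(\calx_b)=12\chi(\calo_{\calx_b})$ is constant), $D\to B$ is finite of locally constant degree, hence étale, and $g^{-1}(D)\to D$ is locally equisingular.

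The step I expect to be the main obstacle is the assertion that a collision or a ramification jump genuinely degenerates the \emph{$\chi$-isotypic} Prym --- that the vanishing cohomology really meets the $\chi$-component rather than sitting in the other isotypic pieces --- together with the quantitative matching of $T(\calx_b)$ with a twist of $H^1(P_b)$ needed to deduce that isotriviality up to isogeny of $P_b$ prevents $\widetilde{\calw}_b$, and hence $D_b$, from degenerating. One should also treat separately the small cases (forced when $\chi(\calo_{\calx_b})$ is minimal, so that $|D_b|$ and $\dim P_b$ are small), in which the configuration of $D_b$ is already rigid and the claim is automatic. The reduction step and the bookkeeping against Kodaira's table are routine.
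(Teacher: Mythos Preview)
Your reduction to the isotrivial case via \autoref{saito} matches the context in which the paper invokes the proposition, but from there your route diverges from the paper's. You work on the transcendental side: identify $T(\calx_b)$ with a twist of the $\chi$-isotypic Prym of the $G$-cover $\widetilde{\calw}_b\to\calw_b$, and argue that a collision in the discriminant would force this Prym to degenerate, contradicting the constancy of $\varphi_2$. The paper works on the algebraic side instead: constancy of $\varphi_2$ makes $NS(\calx_b)=H^{1,1}(\calx_b)\cap H^2(\calx_b,\Z)$ a constant local system by the Lefschetz $(1,1)$ theorem, hence the root sublattice $N_b$ spanned by components of reducible fibres is constant; \autoref{root} then records that the indecomposable summands of $N_b$ determine the Kodaira types of the singular fibres up to the ambiguities $IV\leftrightarrow I_3$ and $III\leftrightarrow I_2$, and isotriviality rules out all $I_n$ fibres, so the singular-fibre configuration is rigid and $D\to B$ is \'etale. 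This is substantially shorter and sidesteps exactly the obstacle you flag --- there is no need to check that vanishing cycles meet the $\chi$-isotypic piece, nor to match $T(\calx_b)$ precisely with the Hodge structure of the Prym --- because the root lattice already encodes the fibre combinatorics directly. Your approach, if the isotypic-degeneration step can be made rigorous, would give a more geometric explanation of why the discriminant cannot move, but the N\'eron--Severi argument is the cleaner one here.
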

Since there is some slight difference between our situation and Oguiso-Viehweg's (the multiple locus are nonempty in their case), we shall sketch the proof of the proposition above. First we recall the following lemma:
\begin{lem}[{\cite[Lemma 5.2]{ov-01}}] \label{root}
Let $\ell:\,X \to W$ be a minimal elliptic surface of Kodaira dimension one with $\chi(\calo_X) \geq 2$. Define the numbers $n_i,m_j$ and $l_k$ as the number of reducible fibres, according to the following list:
\[
\begin{array}{c|c|c|c|c|c|c|c}
\mathrm{type} & {}_mI_i & IV\, \mathrm{or} & III\, \mathrm{or} & I^*_j & II^* & III^* & IV^* \\
              & (m \geq 1,i \geq 4) & {}_mI_3 & {}_mI_2 &  &  &  & \\
\hline
\mathrm{number} &  &  &  &  &  &  &  \\
\mathrm{of \, fibres} & n_i & n_3 & n_2 & m_j & l_8 & l_7 & l_6 \\
\hline
\mathrm{Euler} &  &  &  &  &  &  &  \\
\mathrm{number} & i & 4\, \mathrm{or}\,3 & 3\, \mathrm{or}\,2  & j+6 & 10 & 9 & 8
\end{array}
\]
Let $N_b= \langle \alpha \in NS(\calx_b); (\alpha . F)=0 \, and \, (\alpha.\alpha)=-2 \rangle /\mathbb{Q} \cdot F \cap NS(\calx_b)$ be the root lattice. Here $F$ is a general fibre of the elliptic fibration $\ell_b:\, \calx_b \to \calw_b$. Then
\begin{itemize}
\item [i)] $N_b$ is generated by the classes of irreducible components of reducible fibres of $\ell_b$.
\item [ii)] The numbers $n_i,m_j$ and $l_k$ are uniquely determined by $N_b$ and by its decomposition
\[
N_b \simeq \bigoplus_{i \geq 2} A^{\oplus n_i}_i \oplus \bigoplus_{j \geq 0} D^{\oplus m_j}_{j+4} \oplus \bigoplus^8_{k=6} E^{\oplus l_k}_k
\]
in indecomposable sublattices.
\end{itemize}
\end{lem}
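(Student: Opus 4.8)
The plan is to identify $N_b$ with the sublattice of $NS(\calx_b)$ generated by the irreducible components of the reducible fibres of $g_b$, to read off its decomposition into irreducible root lattices from Kodaira's classification of singular fibres, and then to conclude by the uniqueness of the decomposition of a definite lattice into indecomposable summands. To set up, recall that $NS(\calx_b)$ has signature $(1,\rho_b-1)$ and that $F^2=0$, so by the Hodge index theorem $F^{\perp}/\Q\cdot F$ is negative definite; since $N_b$ is, by construction, a sublattice of $F^{\perp}/\Q\cdot F$ generated by classes of self-intersection $-2$, it is automatically a negative-definite root lattice, and the real task is to describe its generators geometrically.

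For part i) there are two inclusions. First, every irreducible component $C$ of a reducible fibre is a smooth rational curve disjoint from a general fibre, so $C\cdot F=0$; because $g_b$ is relatively minimal and $\kappa(\calx_b)=1$, the canonical bundle formula writes $K_{\calx_b}$ as a pullback from $\calw_b$ plus a non-negative combination of reduced multiple fibres, all of which meet $C$ trivially, so $K_{\calx_b}\cdot C=0$ and hence $C^2=-2$ by adjunction; thus $[C]$ is one of the distinguished $(-2)$-classes. Conversely, let $\alpha\in NS(\calx_b)$ with $\alpha\cdot F=0$ and $\alpha^2=-2$. Since $g_{b*}\alpha$ has degree $\alpha\cdot F=0$ on the curve $\calw_b$ it is numerically trivial, and each reduced multiple fibre is numerically proportional to $F$; hence $\alpha\cdot K_{\calx_b}=0$, and Riemann--Roch gives $h^0(\alpha)+h^0(K_{\calx_b}-\alpha)\geq\chi(\calo_{\calx_b})-1\geq1$. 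Therefore one of $\alpha$, $K_{\calx_b}-\alpha$ is effective; as $F$ is nef and meets both of them trivially, every irreducible component of that effective divisor is contracted by $g_b$, i.e.\ lies in a fibre. As $K_{\calx_b}$ is itself an integral combination of fibre components, so is $\alpha$; this proves i).

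It remains to determine the decomposition. Components of distinct fibres being disjoint, the image in $F^{\perp}/\Q\cdot F$ of the span of all fibre components is the orthogonal direct sum of the contributions of the reducible fibres $v$ (the classes of whole fibres, and of the irreducible singular fibres $I_1$, $II$ and the multiple smooth fibres, all lie in $\Q\cdot F$ and contribute nothing). For a reducible fibre of Kodaira type $T$, with components $C_0,\dots,C_r$, the intersection matrix is the negative of the affine Cartan matrix of the affine diagram $\widetilde R(T)\in\{\widetilde A_*,\widetilde D_*,\widetilde E_*\}$, whose radical is spanned by the null vector $\sum a_iC_i$ --- which is numerically proportional to $F$; passing to $F^{\perp}/\Q\cdot F$ annihilates exactly this radical and leaves the finite root lattice $R(T)$ prescribed by Kodaira's list: $A_{n-1}$ for $I_n$ and for ${}_mI_n$, $D_{j+4}$ for $I^{*}_j$, $A_1$ for $III$, $A_2$ for $IV$, and $E_6,E_7,E_8$ for $IV^{*},III^{*},II^{*}$ respectively. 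Combining with i) gives $N_b\simeq\bigoplus_i A_i^{\oplus n_i}\oplus\bigoplus_j D_{j+4}^{\oplus m_j}\oplus\bigoplus_k E_k^{\oplus l_k}$. For part ii), a negative-definite integral lattice decomposes into indecomposable sublattices uniquely up to order and isomorphism, so this decomposition is canonical; since the $D$-summands appear only in ranks $\geq4$ and the $E$-summands only in ranks $6$, $7$, $8$, none of the exceptional coincidences ($D_3\cong A_3$, $D_2\cong A_1^{\oplus2}$, and the low-rank $E$'s) occurs, and distinct indices give pairwise non-isomorphic indecomposable root lattices; therefore the numbers $n_i$, $m_j$, $l_k$ are uniquely recovered from $N_b$.

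I expect the only delicate step to be the second inclusion in i): excluding $(-2)$-classes orthogonal to $F$ that are not supported on fibres. This is precisely where the hypothesis $\chi(\calo_{\calx_b})\geq2$ enters, via the Riemann--Roch estimate forcing one of $\alpha$, $K_{\calx_b}-\alpha$ to be effective; the remaining ingredients --- Kodaira's classification of singular fibres and the uniqueness of orthogonal decompositions of definite lattices --- are standard. Since Oguiso--Viehweg allow multiple fibres, one should also note that a fibre ${}_mI_n$ has the same configuration of components as $I_n$, so the argument above applies to it without change.
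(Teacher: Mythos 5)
The paper quotes this lemma from \cite[Lemma~5.2]{ov-01} and does not reproduce a proof, so there is no in-paper argument to compare against; I can only assess your reconstruction on its merits. It is correct and follows the route one would expect Oguiso--Viehweg to take: the Riemann--Roch inequality $\chi(\alpha)=\chi(\calo_{\calx_b})-1\geq 1$ is precisely where the hypothesis $\chi(\calo_{\calx_b})\geq 2$ enters, forcing one of $\alpha$, $K_{\calx_b}-\alpha$ to be effective and hence fibre-supported (using $F\cdot\alpha=F\cdot K_{\calx_b}=0$ and $F$ nef); passing from the affine to the finite Dynkin diagram by killing the null root, which is numerically a rational multiple of $F$ and hence dies in $F^\perp/\Q F$, recovers the root-lattice contribution of each reducible Kodaira fibre; and the uniqueness of the orthogonal decomposition of a negative-definite lattice into indecomposables, together with the observation that the possible $A$-, $D$- and $E$-summands in this situation are pairwise non-isomorphic, gives ii). One bookkeeping caveat, which lies in the quoted statement rather than in your argument: a fibre of type ${}_mI_i$ contributes $A_{i-1}$, not $A_i$, so the printed decomposition $\bigoplus_{i\geq 2}A_i^{\oplus n_i}\oplus\cdots$ carries an index shift relative to the table; your Kodaira-type-to-root-lattice dictionary is the correct one.
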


\bigskip
\begin{proof}[Proof of Proposotion\,\ref{sing-locus}]
By the Leftschetz $(1,1)$-Theorem,
$$NS(\calx_b) = H^{1,1}(\calx_b) \cap H^2(\calx_b,\mathbb{Z}).$$
Thus the triviality of the variations of Hodge structures forces
$NS(\calx_b)$ to be a constant local system over $B$.
So the root lattice $N_b$ in Lemma \ref{root}, as well as its decomposition, is independent of $b \in B$.
Since some different types of singular fibres may have the same Dynkin diagram, like type $IV$ and ${}_mI_3$, type $III$ and ${}_mI_2$ in the table, one needs to distinguish those special types (in our situation the multiplicity $m$ always equals to one).
Recall that the elliptic surface $\ell:\, X \to W$ we are concerned about is iso-trivial.
Since the semi-stable singular fibres are preserved under the base change,
we know that type $I_i$ singular fibres ($i \geq 2$) can never appear.
Therefore, the constantness of the root lattice $N_b$ is sufficient to guarantee that singular loci do \emph{not} collide.
\end{proof}

Come back to the proof of \autoref{main-thm2}.
After some \'etale base change we can assume that the discriminant locus of $\calx \to \calw$ are sections of $\calw \cong B \times W \to B$, denoted by $\{D_i\}$.
Each $D_i$ induces a morphism $$s_i:\, B \lra W.$$

\begin{lem}\label{nonconstant}
At least one of those $s_i$'s is non-constant.
\end{lem}
\begin{proof}
We argue by contradiction. Suppose all the sections are constant.
Then each elliptic surface $\ell_b:\, \calx_b \to \calw_b=W$ in our family admits the same discriminant locus $D$ over a fixed curve $W$. Denote by $W_0:= W \setminus D$. Then we have a smooth elliptic fibration $\calx \setminus \ell^{-1}(D \times B) \to W_0 \times B$, which descends to an elliptic fibration $\mathcal{E} \to B$.

If the elliptic fibration $\mathcal{E} \to B$ is non-isotrivial, then one can consider the associated non-trivial weight-1 variation of Hodge structures on $B$. By the Simpson's correspondence, we know that $\pi_1(B)$ is infinite and non-abelian.

So we will focus on the case that $\mathcal{E} \to B$ is also isotrivial. This implies that the elliptic fibration $\ell:\, \calx \to \calw=W \times B$ is isotrivial. Then there exists a finite cover $\calw' \to \calw$ ramified at the discriminant locus $D \times B$, such that $\calx \times_{\calw}\calw' \cong E \times \calw'$ for some elliptic curve $E$. Denote by $d$ the degree of the finite cover $\calw' \to \calw$.

Then each elliptic surface $\calx_b \to W$ can be written as a finite quotient of the product surface $E \times \calw'_b$, where $\calw'_b \to W$ is a finite cover of degree $d$ and ramified over $D$. By the Riemann Existence Theorem, the isomorphism classes of such covers $\calw'_b \to W$ are in an one-to-one correspondence with the conjugacy classes of $\pi_1(W_0) \to S_d$, which are finitely many.

Therefore, the family of base curves $\calw' \to B$ is isotrivial, and each elliptic surface $\calx_b$ is a finite quotient of the same product surface $E \times W'$, where $W'$ is the general fiber of $\calw' \to B$. Such a family $\calx \to B$ cannot be non-isotrivial.
\end{proof}



If $W = \mathbb{P}^1$, then from \cite{par-68,ara-71} we know that each elliptic surface $\mathcal{X}_b$ has at least three singular fibers since it is of Kodaira dimension one.
Take an isomorphism
$$\eta:\, B \times W \to B \times W, \qquad (b,w) \mapsto \left(b, \frac{w-s_1(b)}{w-s_2(b)} \cdot \frac{s_3(b)-s_2(b)}{s_3(b)-s_1(b)} \right).$$
Then one sees that $s_1$, $s_2$ and $s_3$ become constant; in fact,
$$s_1(B)=0,\quad s_2(B)=\infty,\quad s_3(B)=1.$$
According to Lemma~\ref{nonconstant}, there exists at least one section, denoted by $s_4$, such that $s_4$ is a non-constant morphism from the base $B$ to $W=\mathbb{P}^1$.
Moreover, since those sections $s_i$'s are disjoint,
it follows that $$s_4(B) \subseteq \mathbb{P}^1\setminus \{0,1,\infty\}.$$
This implies in particular that $\pi_1(B)$ is infinite and non-abelian as required.

If $g(W)=1$, one can take an isomorphism
$$\eta:\, B \times W \to B \times W, \qquad (b,w) \mapsto \left(b, w\cdot s_1(b)^{-1}\right),$$
where `$\cdot$' is the multiplication for the group structure on $W$.
Similar to the above case, one finds at least one section, denoted by $s_2$,
such that
$s_2$ is a non-constant morphism from $B$ to $W$ and that $s_2(B)\subseteq W\setminus \{s_1(B)=O\}$,
which implies also that $\pi_1(B)$ is infinite and non-abelian.

Finally if $g(W)\geq 2$,
then there exists at least one section, denoted by $s_1$,
such that $s_1$ is a non-constant morphism from $B$ to $W$.
Since $W$ is of genus $\geq 2$, $\pi_1(B)$ is infinite and non-abelian.
This completes the proof.
\qedhere
\end{proof}




\begin{thebibliography}{MVZ06}

\bibitem[AAS]{AAS}
J. W. Anderson, J. Aramayona and K. J. Shackleton, \emph{Uniformly exponential growth and mapping class groups of surfaces}, In the tradition of Ahlfors-Bers. IV, 1–-6, Contemp. Math. \textbf{432}, Amer. Math. Soc., Providence, RI (2007). \MR{2342801}


\bibitem[Ara71]{ara-71}
S.~Ju. Arakelov, \emph{Families of algebraic curves with fixed degeneracies},
  Izv. Akad. Nauk SSSR Ser. Mat. \textbf{35} (1971), 1269--1293. \MR{0321933}

\bibitem[BHPV]{bhpv}
Wolf P., Barth, K., Hulek, C. Peters, and A. Van de Ven,
\emph{Compact complex surfaces}. Second edition.
Springer-Verlag, Berlin, 2004.

\bibitem[BT19]{BT-Ax}
Benjamin Bakker and Jacob Tsimerman, \emph{The {A}x-{S}chanuel conjecture for
  variations of {H}odge structures}, Invent. Math. \textbf{217} (2019), no.~1,
  77--94. \MR{3958791}


\bibitem[Cha80]{chakiris-80}
Kenneth~N. Chakiris, \emph{Counterexamples to global {T}orelli for certain
  simply connected surfaces}, Bull. Amer. Math. Soc. (N.S.) \textbf{2} (1980),
  no.~2, 297--299. \MR{555265}

\bibitem[Gro61]{ega-3}
A.~Grothendieck, \emph{\'el\'ements de g\'eom\'etrie alg\'ebrique. {III}.
  \'etude cohomologique des faisceaux coh\'erents. {I}}, Inst. Hautes \'Etudes
  Sci. Publ. Math. (1961), no.~11, 167. \MR{0163910}

\bibitem[Har77]{Hartshorne-77}
Robin Hartshorne, \emph{Algebraic geometry}, Springer-Verlag, New
  York-Heidelberg, 1977, Graduate Texts in Mathematics, No. 52. \MR{0463157}

\bibitem[Ki{\u{\i}}78]{Ki78}
K.~I. Ki{\u{\i}}, \emph{The local {T}orelli theorem for manifolds with a
  divisible canonical class}, Izv. Akad. Nauk SSSR Ser. Mat. \textbf{42}
  (1978), no.~1, 56--69, 214. \MR{0499325}

\bibitem[Klo04]{Kl04}
Remke Kloosterman, \emph{Extremal elliptic surfaces and infinitesimal
  {T}orelli}, Michigan Math. J. \textbf{52} (2004), no.~1, 141--161.
  \MR{2043402}

\bibitem[Kob98]{kob-98}
Shoshichi Kobayashi, \emph{Hyperbolic complex spaces}, Grundlehren der
  Mathematischen Wissenschaften [Fundamental Principles of Mathematical
  Sciences], vol. 318, Springer-Verlag, Berlin, 1998. \MR{1635983}

\bibitem[Kov03]{kovac-03}
S\'andor~J. Kov\'acs, \emph{Families of varieties of general type: the
  {S}hafarevich conjecture and related problems}, Higher dimensional varieties
  and rational points ({B}udapest, 2001), Bolyai Soc. Math. Stud., vol.~12,
  Springer, Berlin, 2003, pp.~133--167. \MR{2011746}

\bibitem[Kov09]{kovac-05}
\bysame, \emph{Subvarieties of moduli stacks of canonically polarized
  varieties: generalizations of {S}hafarevich's conjecture}, Algebraic
  geometry---{S}eattle 2005. {P}art 2, Proc. Sympos. Pure Math., vol.~80, Amer.
  Math. Soc., Providence, RI, 2009, pp.~685--709. \MR{2483952}

\bibitem[Lan86]{L86}
Serge Lang, \emph{Hyperbolic and {D}iophantine analysis}, Bull. Amer. Math.
  Soc. (N.S.) \textbf{14} (1986), no.~2, 159--205. \MR{828820}

\bibitem[Mil68a]{Mil68}
J.~Milnor, \emph{A note on curvature and fundamental group}, J. Differential
  Geometry \textbf{2} (1968), 1--7. \MR{232311}

\bibitem[Mil68b]{Mil68b}
J.~Milnor, \emph{Growth of finitely generated solvable groups}, J. Differential
  Geometry \textbf{2} (1968), 447--449. \MR{244899}


\bibitem[MVZ06]{mvz-05}
Martin M\"oller, Eckart Viehweg, and Kang Zuo, \emph{Special families of
  curves, of abelian varieties, and of certain minimal manifolds over curves},
  Global aspects of complex geometry, Springer, Berlin, 2006, pp.~417--450.
  \MR{2264111}

\bibitem[Nor83]{nori-83}
Madhav~V. Nori, \emph{Zariski's conjecture and related problems}, Ann. Sci.
  \'{E}cole Norm. Sup. (4) \textbf{16} (1983), no.~2, 305--344. \MR{732347}

\bibitem[OV01]{ov-01}
Keiji Oguiso and Eckart Viehweg, \emph{On the isotriviality of families of
  elliptic surfaces}, J. Algebraic Geom. \textbf{10} (2001), no.~3, 569--598.
  \MR{1832333}

\bibitem[Ps68]{par-68}
A.~N. Par\v{s}in, \emph{Algebraic curves over function fields. {I}}, Izv. Akad.
  Nauk SSSR Ser. Mat. \textbf{32} (1968), 1191--1219. \MR{0257086}

\bibitem[Sai83]{saito-83}
Masa-Hiko Sait\=o, \emph{On the infinitesimal {T}orelli problem of elliptic
  surfaces}, J. Math. Kyoto Univ. \textbf{23} (1983), no.~3, 441--460.
  \MR{721378}


\bibitem[Siu02]{Siu02}
Yum-Tong Siu, \emph{Some recent transcendental techniques in algebraic and
  complex geometry}, Proceedings of the {I}nternational {C}ongress of
  {M}athematicians, {V}ol. {I} ({B}eijing, 2002), Higher Ed. Press, Beijing,
  2002, pp.~439--448. \MR{1989197}

\bibitem[SY96]{SY96}
Yum-Tong Siu and Sai-kee Yeung, \emph{Hyperbolicity of the complement of a
  generic smooth curve of high degree in the complex projective plane}, Invent.
  Math. \textbf{124} (1996), no.~1-3, 573--618. \MR{1369429}


\bibitem[Tit72]{Tit72}
Jacques Tits, \emph{Free subgroups in linear groups}, J. Algebra \textbf{20} (1972), 250--270. \MR{286898}


\bibitem[TY15]{ToYeung}
Wing-Keung To and Sai-Kee Yeung, \emph{Finsler metrics and {K}obayashi
  hyperbolicity of the moduli spaces of canonically polarized manifolds}, Ann.
  of Math. (2) \textbf{181} (2015), no.~2, 547--586. \MR{3275846}

\bibitem[Vie01]{vie-01}
Eckart Viehweg, \emph{Positivity of direct image sheaves and applications to
  families of higher dimensional manifolds}, School on {V}anishing {T}heorems
  and {E}ffective {R}esults in {A}lgebraic {G}eometry ({T}rieste, 2000), ICTP
  Lect. Notes, vol.~6, Abdus Salam Int. Cent. Theoret. Phys., Trieste, 2001,
  pp.~249--284. \MR{1919460}

\bibitem[Voi07]{Voi_BOOK}
Claire Voisin, \emph{Hodge theory and complex algebraic geometry. {II}},  Cambridge University Press, Cambridge, 2007. x+351 pp. Cambridge Studies in Advanced Mathematics, 77.  \MR{2449178}


\bibitem[VZ03]{vz}
Eckart Viehweg and Kang Zuo, \emph{On the {B}rody hyperbolicity of moduli
  spaces for canonically polarized manifolds}, Duke Math. J. \textbf{118}
  (2003), no.~1, 103--150. \MR{1978884}

\bibitem[Wol68]{Wol68}
Joseph A. Wolf, \emph{Growth of finitely generated solvable groups and curvature of Riemannian manifolds}, J. Differential
  Geometry \textbf{2} (1968), 421--446. \MR{248688}



\bibitem[Zuo96]{Zuo96}
Kang Zuo, \emph{Kodaira dimension and {C}hern hyperbolicity of the
              {S}hafarevich maps for representations of {$\pi_1$} of compact
              {K}\"{a}hler manifolds}, J. Reine Angew. Math. \textbf{472}
  (1996), 139--156. \MR{1384908}



\end{thebibliography}

\end{document}